\newcommand*\patchAmsMathEnvironmentForLineno[1]{%
	\expandafter\let\csname old#1\expandafter\endcsname\csname #1\endcsname
	\expandafter\let\csname oldend#1\expandafter\endcsname\csname end#1\endcsname
	\renewenvironment{#1}%
	{\linenomath\csname old#1\endcsname}%
	{\csname oldend#1\endcsname\endlinenomath}}%
\newcommand*\patchBothAmsMathEnvironmentsForLineno[1]{%
	\patchAmsMathEnvironmentForLineno{#1}%
	\patchAmsMathEnvironmentForLineno{#1*}}%
\journal{arxiv}
\definecolor{mycolor}{RGB}{255,251,204}
\tikzstyle{mybox} = [draw=yellow, very thick,
\tikzstyle{fancytitle} =[fill=white, text=red]
\newtheorem{thm}{Theorem}[section]
\newtheorem{cor}[thm]{Corollary}
\newtheorem{lem}[thm]{Lemma}
\newtheorem{prop}[thm]{Proposition}
\theoremstyle{definition}
\theoremstyle{remark}
\numberwithin{equation}{section}
\newcommand{\ep}{\varepsilon}
\newcommand{\R}{\mathbb{R}}				      
\begin{document}

\begin{frontmatter}

\title{\textsc{Carleman estimates for parabolic equations with super strong degeneracy in a set of positive measure}}

\author[UFCG]{Bruno S. V. Ara\'ujo}
\ead{bsergio@mat.ufcg.edu.br}

\author[RCN]{Reginaldo  Demarque\corref{mycorrespondingauthor}}
\cortext[mycorrespondingauthor]{Corresponding author}
\ead{reginaldo@id.uff.br}

\author[UEM]{Josiane C. O. Faria}
\ead{jcofaria@uem.br}

\author[GAN]{Luiz Viana}
\ead{luizviana@id.uff.br}

\address[UFCG]{Unidade Acadêmica de Matemática, Universidade Federal de Campina Grande, Campina Grande, PB, Brazil}
\address[RCN]{Departamento de Ciências da Natureza,
	Universidade Federal Fluminense,
	Rio das Ostras, RJ, Brazil}

\address[UEM]{Departamento de Matemática, Universidade Estadual de Maring\'a,
	Maring\'a, PR, Brazil}

 \address[GAN]{Departamento de Análise,
	Universidade Federal Fluminense,
	Niter\'{o}i, RJ, Brazil}
\begin{abstract}
	This work is concerned with the obtainment of new Carleman estimates for linear parabolic equations, where the second-order differential operator brings a super strong degeneracy in a positive measure subset of the spatial domain. In order to prove our main result, the control domain is supposed to contain the set of degeneracies. As a well-known consequence, we achieve a null controllability result in the current context. 
\end{abstract}

\begin{keyword} degenerate parabolic equations, Carleman estimates, linear systems in control theory, observability inequality.

\MSC[2020]{35K65, 93B05, 93C05, 93B07.}
\end{keyword}

\end{frontmatter}

\section{Introduction}\label{intro}

In this paper, we study the null controllability of the following degenerate parabolic system

 \begin{equation}\label{prob0}
\begin{cases}
u_t-(a(x) u_x)_x +c(x,t)u=  f1_{\omega} &\text{in }    Q:=(0,1)\times(0,T),\\ u(0,t)=u(1,t)=0   & \text{in }  (0,T),\\ u(x,0)= u_0(x)  & \text{in } (0,1),
\end{cases}
\end{equation}
where $a\in W^{2,\infty}(0,1)$, $c\in L^\infty(Q)$, the control $f$ belongs to $L^2(Q)$,  $u_0\in L^2(0,1)$, the control domain $\omega\subset(0,1)$ is a non-empty open interval and $1_\omega$ denotes its associated characteristic function. 

We say that \eqref{prob0} is {\it null controllable} at time $T>0$ if, for any $u_0\in L^2(0,1)$, there exists a control function $f\in L^2(Q)$ such that the solution $u$ of \eqref{prob0} satisfies \begin{equation}
    \label{null}u(x,T)=0 \ \ \ \mbox{a. e. in} \ (0,1).
\end{equation}

The null controllability of degenerate parabolic equations, such as  \eqref{prob0}, has been extensively researched in the past two decades. One of the earliest works in this direction is \cite{cannarsa2006null}, due to Cannarsa and Fragnelli, where it is assumed that the function $a$ degenerates at the point $x=0$. To be more precise, the study in \cite{cannarsa2006null} considers two types of degeneracy, as described below. 

{\it \bf Weakly degenerate case (WDC):}

\begin{enumerate}
    \item $a\in\mathcal{C}([0,1])\cap\mathcal{C}^1((0,1]), \ a>0 \ \mbox{in} \ (0,1], \ a(0)=0$;
    \item $\exists K\in [0,1) \mbox{ such that } \ xa'(x)\leq Ka(x) \ \forall x\in[0,1]$.
\end{enumerate}

{\it \bf Strongly degenerate case (SDC):}

\begin{enumerate}
    \item $a\in\mathcal{C}^1([0,1]), \ a>0 \ \mbox{in} \ (0,1], \ a(0)=0$;
    \item $\exists K\in [1,2) \mbox{ such that } \ xa'(x)\leq Ka(x) \ \forall x\in[0,1]$;
    \item $\left\{\begin{array}{cc}\exists\theta\in(1,K]; \ x\longmapsto\frac{a(x)}{x^\theta} \ \mbox{is nondecreasing near} \ 0,& \mbox{if} \ K>1;\\ \exists\theta\in(0,1) \ x\longmapsto\frac{a(x)}{x^\theta} \ \mbox{is nondecreasing near} \ 0,& \mbox{if} \ K=1.\end{array}\right.$
\end{enumerate} 
The description above has the function $a(x)=x^{\alpha}$ as a prototype, where $\alpha \in (0,1)$ for the (WDC) and $\alpha \in [1,2)$ for the (SDC). 
 In this particular scenario, some Carleman estimates are presented in \cite{cannarsa2006null}, which imply null controllability results by using the {\it Hilbert's Uniqueness Method} (HUM). Additionally, the mentioned work also establishes that the {\it super strongly degenerate problem} ($\alpha \geq 2$) is not null controllable, in general.

Later, in \cite{fragnelli2013carleman}, similar results were achieved even when the degeneracy occurs within an interior point of $(0,1)$. In that work, it is considered that there exists $x_0\in(0,1)$ such that the degenerate function $a\in\mathcal{C}^1([0,1]-\{x_0\})$ satisfies $a(x_0)=0$ and $a>0$ in $[0,1]-\{x_0\}$. Besides that, the function $a$ must also satisfy one of the two conditions: 

\begin{itemize}
\item[(a)]  $\exists K\in(0,1)$ such that $(x-x_0)a'(x)\leq Ka(x) \ \forall x\in[0,1]-\{x_0\}$;
\item[(b)]  $a\in W^{1,\infty}(0,1)$ and $\exists K\in[1,2)$ such that $(x-x_0)a'(x)\leq Ka(x) \ \forall x\in[0,1]-\{x_0\}$,
\end{itemize}
where (a) represents a reformulation of the weakly degenerate case (RWDC), as well as, (b) is a reformulation of the strongly degenerate one (RSDC). 
A typical example of such a function is $a(x)=|x-x_0|^\alpha$, for $\alpha \in (0,2)$. It is worth noting that the null controllability theorems, as proven in \cite{fragnelli2013carleman}, rely on a geometric assumption over the control domain $\omega$, namely \begin{equation}
    \label{hipgeo1}x_0\in\omega.
\end{equation}

More recently, in \cite{faria2020carleman}, the third author of this current paper extended the investigation developed in \cite{fragnelli2013carleman}, dealing with second-order operators that can degenerate in a set of positive measure. To be more precise, it is taken into consideration $(a(x)u_x )_x$, where $a\equiv 0$ in an interval
$[A,B]\subset(0,1)$ and the geometric control domain condition \begin{equation}
    \label{hipgeo2}[A,B]\subset\omega
\end{equation}
is imposed. It is important to point out here that \eqref{hipgeo2} is a natural adaptation of \eqref{hipgeo1} in this context. Furthermore, this kind of assumption is not considered in  \cite{cannarsa2006null}, where the authors established that the null controllability property does not hold for $\alpha\geq2$.

The main novelty of this paper is to extend the investigation of \cite{faria2020carleman} and \cite{fragnelli2013carleman} for the super strongly degenerate case. Here we still assume the geometrical assumption \eqref{hipgeo2} and also consider the following additional regularity hypotheses related to $a=a(x)$:


\begin{equation}\label{hipreg}
\frac{1}{a} \notin L^1 ([0,A)\cup(B,1]), \\
    a\in W^{2,\infty}(0,1) \ \ \mbox{and} \ \ 
    a a_{xx}\in W^{1,\infty}(0,1).
\end{equation}
\

A similar condition to the first one presented in \eqref{hipreg} is also found in \cite{fragnelli2012, fragnelli2015}, and it plays a crucial role in demonstrating the existence of a solution to the problem at hand. This condition is naturally true under the strongly degenerate case (SDC) assumptions. However, it is worth noting that, for the super strong case, this condition excludes the possibility of the compact embedding of the space $H^1_a(0,1)$, which is defined ahead, into the space $L^2(0,1)$. This observation has been made in Cannarsa and Fragnelli's work \cite{cannarsa2006null}, specifically for the case where $a(x)=x^{\alpha}, \alpha\geq2$, and it can be easily extended to the case where $a(x)=(x-x_0)^{\alpha}, \alpha\geq2$. This limitation becomes an obstacle when studying the controllability of the semilinear parabolic problem associated with \eqref{prob0} since, in general, fixed point methods are employed to obtain the controllability of semilinear problems.

In the sequel, the whole discussion assumes that $a=a(x)$ degenerates in $[A,B]\subset \omega \subset (0,1)$, that is, 

\begin{equation}\label{hipdeg}
\begin{cases} 
 1. \  a(x)=0 \ \mbox{in} \  [A,B];\\
2. \ a(x)>0 \ \mbox{in} \  [0,A)\cup(B,1],
\end{cases} 
\end{equation}
and \eqref{hipreg} holds. For example, the function $a:[0,1]\longrightarrow \R$, given by

\begin{equation*}
a(x)=\begin{cases}
(A-x)^\alpha, & x\in [0,A),\\
0, & x\in [A,B],\\
(x-B)^\beta, & x\in (B,1],
\end{cases}
\end{equation*}
with $\alpha,\beta\geq 2$, fulfills the properties that describe the super strongly degenerate condition. Also, we are supposed to emphasize that our approach extends \cite{faria2020carleman} and \cite{fragnelli2013carleman} to the super-strongly degenerate range, including the situation $A=B$.





At this point, we are ready to present the main result of this paper:

\begin{thm}\label{main}    Assume that the function $a$ satisfies \eqref{hipreg} and \eqref{hipdeg}. If $\omega$ satisfies \eqref{hipgeo2}, then the system \eqref{prob0} is null controllable.\end{thm}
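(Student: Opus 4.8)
The plan is to follow the standard duality route for linear parabolic null controllability. First I would fix the functional framework: introduce the weighted space $H^1_a(0,1)=\{v\in L^2(0,1):\sqrt{a}\,v_x\in L^2(0,1)\}$ (with the boundary behaviour at $A,B$ dictated by the degeneracy) and the associated unbounded operator $v\mapsto-(a v_x)_x$, self-adjoint and nonnegative on $L^2(0,1)$, and check that \eqref{prob0} and its adjoint
\[
-v_t-(a v_x)_x+c\,v=0\ \text{in}\ Q,\qquad v(0,\cdot)=v(1,\cdot)=0,\qquad v(\cdot,T)=v_T,
\]
are well posed in this setting; it is here that the first condition in \eqref{hipreg}, $1/a\notin L^1([0,A)\cup(B,1])$, together with $a\in W^{2,\infty}(0,1)$, enters (and it is also what precludes compactness of $H^1_a\hookrightarrow L^2$). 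By the classical HUM / Fenchel--Rockafellar argument (minimising $J(v_T)=\frac12\int_0^T\!\!\int_\omega|v|^2\,dx\,dt+\langle u_0,v(\cdot,0)\rangle_{L^2(0,1)}$ over the completion of the data space with respect to the observation seminorm), Theorem~\ref{main} is equivalent to the observability inequality
\[
\|v(\cdot,0)\|_{L^2(0,1)}^2\le C\int_0^T\!\!\int_\omega |v|^2\,dx\,dt
\]
for all solutions $v$ of the adjoint problem.

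The core of the proof is a global Carleman estimate for the adjoint operator, and this is where the geometric hypothesis \eqref{hipgeo2} is exploited to sidestep the super strong degeneracy entirely. Since $[A,B]\subset\omega$ and $\omega$ is an open interval, I would fix $\alpha_0,\beta_0$ with $0<\alpha_0<A\le B<\beta_0<1$ and $[\alpha_0,\beta_0]\subset\omega$, and build a weight of Fursikov--Imanuvilov type $\varphi(x,t)=\theta(t)\big(e^{2\lambda\|\psi\|_\infty}-e^{\lambda\psi(x)}\big)$, $\theta(t)=\big(t(T-t)\big)^{-1}$, with $\psi\in C^2([0,1])$ chosen constant on $[\alpha_0,\beta_0]$, with $\psi'\ne0$ on $[0,\alpha_0]\cup[\beta_0,1]$ and the usual sign of the normal derivative at $x=0,1$. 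On $[0,\alpha_0]$ and on $[\beta_0,1]$ the coefficient $a$ is bounded below by a positive constant, so there the operator is uniformly parabolic; cutting $v$ off by a function equal to $1$ near $x=0$ (resp.\ $x=1$) and vanishing before $\alpha_0$ (resp.\ after $\beta_0$) produces functions supported on the two non-degenerate pieces, to which the classical Carleman computation applies. The commutator terms created by the cut-off are supported in a compact subset of $\omega\cap\big([0,A)\cup(B,1]\big)$, on which $a>0$; after a Caccioppoli (local energy) estimate disposing of the first-order terms they are dominated by $\int_0^T\!\!\int_\omega|v|^2$. On $[\alpha_0,\beta_0]\subset\omega$ the weight depends on $t$ only, so that piece contributes only an observation term on $\omega$. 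Summing the contributions and absorbing the lower-order term $c\,v$ ($c\in L^\infty(Q)$) into the left side for $\lambda$ and $s$ large yields a global Carleman estimate
\[
\int_0^T\!\!\int_0^1 \rho(x,t)\,|v|^2\,dx\,dt\le C\int_0^T\!\!\int_\omega |v|^2\,dx\,dt,
\]
where $\rho>0$ is the Carleman weight, bounded below on $[T/4,3T/4]\times[0,1]$. The regularity assumptions $a\in W^{2,\infty}(0,1)$ and $a\,a_{xx}\in W^{1,\infty}(0,1)$ are precisely what make the integrations by parts in the Carleman identity legitimate up to the interfaces $x=\alpha_0,\beta_0$.

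From the Carleman estimate the observability inequality is obtained in the usual way: restricting the time integration on the left to $[T/4,3T/4]$, where $\rho$ is bounded below, gives $\int_{T/4}^{3T/4}\!\!\int_0^1|v|^2\le C\int_0^T\!\!\int_\omega|v|^2$; combining this with the dissipation estimate $\frac{d}{dt}\|v(t)\|_{L^2(0,1)}^2\le C\|v(t)\|_{L^2(0,1)}^2$ (integrated backwards from $t\in[T/4,3T/4]$ down to $t=0$) controls $\|v(\cdot,0)\|_{L^2(0,1)}^2$ by the right-hand side, which is the sought observability inequality; the duality argument of the first paragraph then produces the control $f\in L^2(Q)$ driving \eqref{prob0} to rest at time $T$. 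The step I expect to be the main obstacle is not any single inequality but the interface and functional-analytic bookkeeping proper to the super strong regime: identifying the natural conditions that solutions satisfy at the edges $A,B$ of the degeneracy set, establishing enough regularity of the adjoint solutions for the Carleman integrations by parts to be valid, and verifying that the commutator and interface terms at $\alpha_0,\beta_0$ are genuinely absorbable — all of which hinge on the hypotheses \eqref{hipreg} and \eqref{hipdeg}.
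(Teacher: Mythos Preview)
Your proposal is correct and arrives at the same observability inequality and null controllability conclusion, but the route to the Carleman estimate is genuinely different from the paper's. The paper does \emph{not} localise: it proves a single global Carleman inequality on all of $Q$ with the explicit weight $\eta(x)=-(x-x_0)^2/2$, $x_0=(A+B)/2$ (quadratic everywhere, not constant on the degenerate zone), carrying out the full conjugated-operator computation $(\!(P^-z,P^+z)\!)$ and using the pointwise lower bound $|\eta'(x)|\,a(x)\ge C$ on $[0,1]\setminus\omega_\delta$ to push every degenerate contribution into the local observation term on $\omega_\delta\supset[A,B]$; the hypotheses $a\in W^{2,\infty}$ and $a\,a_{xx}\in W^{1,\infty}$ enter precisely to bound the higher-order coefficients (e.g.\ $[(\eta'a)_{xx}a]_x$) generated by that global calculation. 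Your gluing strategy---cut off to the two uniformly parabolic slabs, apply the classical non-degenerate Fursikov--Imanuvilov estimate there, and dump the middle piece and the commutators into the observation on $\omega$---is more modular (it reduces to the off-the-shelf heat Carleman and avoids the delicate global integration by parts across the degeneracy), and could in principle get away with less regularity on $a$; the paper's approach, on the other hand, yields a single explicit weighted estimate on the whole domain in one stroke. For the passage from observability to null controllability, the paper uses the penalised functional $J_\varepsilon(f)=\tfrac12\|f\|_{L^2(Q)}^2+\tfrac{1}{2\varepsilon}\|u^f(\cdot,T)\|_{L^2}^2$ and lets $\varepsilon\to0$, whereas you invoke the HUM functional on the adjoint directly; the two are standard and equivalent. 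One small point to watch in your write-up: your weight $\psi$ cannot literally be both constant on $[\alpha_0,\beta_0]$ and have $\psi'\neq0$ on the closed interval $[0,\alpha_0]$; you will need a transition layer strictly inside $\omega$, which your cut-off argument already accommodates.
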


The remainder of this work is dedicated to obtain Carleman and observability estimates for the adjoint equations associated with the equation \eqref{prob0}, following the ideas presented in \cite{araujo2023carleman}. This is accomplished in Section \ref{carleman}. As a result, we apply these estimates to prove Theorem \ref{main} in Section \ref{NC}.

\section{Carleman and observability inequalities}\label{carleman}

This section is devoted to the obtainment of a Carleman-type estimate, which will lead us to our null controllability results. Such an inequality is valid for any solution of
\begin{equation}\label{prob0'}
\begin{cases}
v_t+(a(x) v_x)_x -c(x,t)u =  h &\text{in }    Q,\\ v(0,t)=v(1,t)=0   & \text{in }  (0,T),\\ v(x,T)= v_T(x)  & \text{in } (0,1),
\end{cases}
\end{equation}
which is the adjoint system associated with the linearization of \eqref{prob0}.

Before proving our Carleman and observability inequalities, we will present some spaces and well-posedness results for \eqref{prob0}.

Let us consider the spaces 
\begin{equation*}
\begin{split}
H_a^1:= \{  u\in L^2(0,1);\ u\text{ is {locally} absolutely continuous in } [0,1],\ \\
 (a^{1/2} u_x)(x)\in L^2(0,1) \mbox{ and } u(0)=u(1)=0\}
\end{split}
\end{equation*}
and 
\[
H_{\alpha}^2:= \{  u\in H_{\alpha}^1;\ a^{1 /2} u_x\in H^1(0,1) \}
\]
endowed, respectively, with the norms
\[ \|u\|_{H_a^1}:=\left( \|u\|_{L^2(0,1)}^2+\| a^{1/2} u_x\|_{L^2(0,1)}^2 \right) ^{1/2} ;\]
and
$\|u\|_{H_{\alpha}^2}:=\left( \|u\|_{H_{\alpha}^1}^2+\|(a u_x)_x\|_{L^2(0,1)}^2 \right) ^{1/2}$.


The following well-posedness result for \eqref{prob0} was obtained in \cite[Theorem 2.2]{faria2020carleman} for the weakly and strongly degenerate cases. However, following the same ideas presented there, we can prove:

\begin{prop}\label{well-pos}
Assume that $a=a(x)$ satisfies  \eqref{hipreg} and \eqref{hipdeg}.	Given  $f\in L^1(0,T;L^2(0,1))$ and $u_0\in L^2(0,1)$, there exists a unique weak solution $u\in C^0([0,T];H^1_a)\cap C^1([0,T];L^2(0,1))$ of \eqref{prob0}.	In addition, there exists a positive constant $C_{T,a}$ such that
	\begin{equation}\label{ineq1}
	\sup_{t\in[0,T]} \|u(t)\|^2_{L^2(0,1)}+\int_0^T\|u(t)\|^2_{H_a^1}\,dt 
	\leq C_{T,a}\left(\|f\|^2_{L^2(Q)} +\|u_0\|^2_{L^2(0,1)}\right).
	\end{equation}
\end{prop}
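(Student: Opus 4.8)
The plan is to follow the classical semigroup/variational route for degenerate parabolic equations, adapting the proof of \cite[Theorem 2.2]{faria2020carleman} to the present hypotheses \eqref{hipreg}--\eqref{hipdeg}. First I would set up the unbounded operator $A u := (a u_x)_x$ on $L^2(0,1)$ with domain $D(A) := H_a^2$, and verify that $A$ is self-adjoint, dissipative, and generates an analytic semigroup on $L^2(0,1)$. The key functional-analytic ingredients here are an integration-by-parts (Green) formula
\[
\int_0^1 (a u_x)_x v \, dx = -\int_0^1 a u_x v_x \, dx \qquad \forall u\in H_a^2,\ v\in H_a^1,
\]
which requires checking that the boundary terms at $x=0,1$ vanish and that the degeneracy on $[A,B]$ produces no spurious interface contributions; since $a\in W^{2,\infty}(0,1)$ and $a\equiv 0$ on $[A,B]$, the flux $a u_x$ is continuous and vanishes at $x=A$ and $x=B$, so the formula holds without extra interface conditions. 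Combined with the Lax–Milgram theorem applied to the bilinear form $(u,v)\mapsto \lambda\langle u,v\rangle + \int_0^1 a u_x v_x\,dx$ on $H_a^1$, this gives maximal dissipativity and hence, by Lumer–Phillips, a $C_0$-semigroup of contractions $(e^{tA})_{t\ge0}$.

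Next I would treat the zero-order term $c(x,t)u$ and the source $f$ by a standard perturbation/Duhamel argument: since $c\in L^\infty(Q)$, for fixed $t$ the map $u\mapsto -c(\cdot,t)u$ is a bounded perturbation on $L^2(0,1)$, so writing \eqref{prob0} in mild form
\[
u(t) = e^{tA}u_0 + \int_0^t e^{(t-s)A}\bigl(f(s) - c(\cdot,s)u(s)\bigr)\, ds
\]
and applying a Banach fixed point / Gronwall argument in $C^0([0,T];L^2(0,1))$ yields existence and uniqueness of a mild solution for $f\in L^1(0,T;L^2(0,1))$, $u_0\in L^2(0,1)$. To upgrade mild to weak solution and obtain the regularity $u\in C^0([0,T];H_a^1)\cap C^1([0,T];L^2(0,1))$, I would use the analyticity of the semigroup together with the energy method: formally multiply the equation by $u_t$ and by $u$, integrate over $(0,1)$, and use the Green formula above to control $\frac{d}{dt}\|a^{1/2}u_x\|_{L^2}^2$, which requires the regularity $a\in W^{2,\infty}$, $aa_{xx}\in W^{1,\infty}$ precisely to justify the manipulations near the degeneracy set (this is where \eqref{hipreg} beyond mere continuity is used). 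A density argument passing from smooth data to general data then legitimizes these a priori computations.

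Finally, the energy estimate \eqref{ineq1} follows from the same multiplier computation made rigorous: multiplying the PDE by $u$, integrating in $x$, and using $\int_0^1 (au_x)_x u\,dx = -\|a^{1/2}u_x\|_{L^2}^2$ gives
\[
\frac12 \frac{d}{dt}\|u(t)\|_{L^2(0,1)}^2 + \|a^{1/2}u_x(t)\|_{L^2(0,1)}^2 = \int_0^1 f u\, dx - \int_0^1 c u^2\, dx,
\]
and then Cauchy–Schwarz on the source term, the bound $\|c\|_{L^\infty(Q)}$ on the zero-order term, and Gronwall's lemma yield the supremum bound on $\|u(t)\|_{L^2}^2$; integrating the differential inequality in $t$ over $[0,T]$ then absorbs $\int_0^T\|a^{1/2}u_x\|_{L^2}^2\,dt$, and adding $\int_0^T\|u(t)\|_{L^2}^2\,dt \le T\sup_t\|u(t)\|_{L^2}^2$ produces the full $H_a^1$ norm on the left, with a constant $C_{T,a}$ depending only on $T$, $\|a\|_{W^{2,\infty}}$ and $\|c\|_{L^\infty(Q)}$.

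The main obstacle, and the only place where the argument genuinely departs from the non-degenerate textbook case, is the justification of the integration-by-parts formula and the associated density of smooth functions in $H_a^1$ and $H_a^2$ when $a$ vanishes on a set of positive measure $[A,B]$: one must confirm that functions in $H_a^1$, which are only required to be locally absolutely continuous with $a^{1/2}u_x\in L^2$, can be approximated by regular functions and that no trace/jump terms appear across the interface $\{A,B\}$. The first hypothesis in \eqref{hipreg}, namely $1/a\notin L^1([0,A)\cup(B,1])$, is exactly what prevents the flux from being nonzero at the endpoints of the degeneracy interval (an $H_a^1$ function with $a^{1/2}u_x\in L^2$ near $x=A$ must have $a u_x \to 0$), and I would invoke \cite{fragnelli2012,fragnelli2015} together with the argument of \cite[Theorem 2.2]{faria2020carleman} to handle this; everything else is a routine adaptation of semigroup theory and the energy method.
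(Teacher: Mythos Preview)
The paper does not actually supply a proof of this proposition: it simply states that the result ``was obtained in \cite[Theorem~2.2]{faria2020carleman} for the weakly and strongly degenerate cases'' and that ``following the same ideas presented there, we can prove'' the present version. Your proposal is precisely a fleshed-out version of that same strategy---semigroup generation for $Au=(au_x)_x$ via dissipativity and Lax--Milgram, bounded perturbation for the $c$-term, Duhamel/mild formulation, and the standard energy multiplier for \eqref{ineq1}---so it is fully aligned with (indeed, considerably more detailed than) what the paper itself offers.
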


Now, under the assumptions described in \eqref{hipgeo2}, let us consider $x_0=(A+B)/2$ and $\delta>0$ such that 
\[
[A,B]\subset \omega_\delta \subset\subset \omega ,
\]
where $\omega_\delta=(x_0-\delta,x_0+\delta)$.  Since $a\in W^{2,\infty}(0,1)\hookrightarrow C^1([0,1])$, let $m_\delta >0$ be the minimum of
\[
\displaystyle \{ a(x); x\in [0,1]-\omega_\delta \},
\]
that is, 
\begin{equation}
\label{hip1}a(x)\geq m_\delta \ \ \text{ for any } \ \ x\in [0,1]-\omega_\delta.
\end{equation}


As usual in the Carleman method, for each $\lambda >0$, we start introducing a set of weight functions, as below:
\begin{multline}\label{pesos}
\theta(t)=\frac{1}{[t(T-t)]^4}, \ \ \eta(x)=-\frac{(x-x_0)^2}{2}, \ \ \xi(x,t)=\theta(t)e^{\lambda(2|\eta|_\infty+\eta(x))}\\  \mbox{and} \ \ \sigma(x,t)=\theta(t)e^{4\lambda|\eta|_\infty}-\xi(x,t).
\end{multline}

From \eqref{hip1}, we observe that there exists $C>0$ such that \begin{equation}\label{hip3}
|\eta'(x)|a(x)\geq C \ \ \mbox{for any} \ \ x\in [0,1]-\omega_\delta .
\end{equation}

In light of the previous explanation, we can now obtain the desired Carleman estimate, as stated in the following theorem.

\begin{thm}[Carleman estimate]\label{th.carleman}
 Assume that the function $a$ satisfies \eqref{hipreg} and \eqref{hipdeg}. If $\omega$ satisfies \eqref{hipgeo2}, then there exist positive constants $C$, $s_0$ and $\lambda_0$, only depending on $T$, $a$, $c$ and $\omega$, such that, for any $s\geq s_0$, $\lambda \geq \lambda_0$ and $v$ solution of \eqref{prob0'}, we have
\begin{align}\label{eq18a}
\iint_Q e^{-2s\sigma} \left[s^{-1}\lambda^{-1}\xi^{-1}\left(|v_t|^2+ \left|(a(x)v_x)_x\right|^2\right)+s^3\lambda^4\xi^3|v|^2+s\lambda^2\xi a(x)|v_x|^2\right]\,dx\,dt\nonumber\\ \leq C\left[\|e^{-s\sigma}h\|_2^2+s^3\lambda^4\int_0^T\int_{\omega_\delta}e^{-2s\sigma}\xi^3|v|^2\,dx\,dt\right].
\end{align}

\end{thm}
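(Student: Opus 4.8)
The plan is to follow the classical Fursikov--Imanuvilov strategy, adapted to the degenerate setting as in \cite{araujo2023carleman, fragnelli2013carleman, faria2020carleman}, but now exploiting the fact that the degeneracy set $[A,B]$ sits well inside the observation region $\omega_\delta$. First I would perform the change of variables $w = e^{-s\sigma} v$, so that $v = e^{s\sigma} w$ and $w$ vanishes at $t=0,T$ (because $\sigma \to +\infty$ there) and at $x=0,1$. Substituting into \eqref{prob0'} and splitting the resulting operator applied to $w$ into its formally self-adjoint part $P_1 w$ and skew-adjoint part $P_2 w$, one gets $P_1 w + P_2 w = e^{-s\sigma} h + (\text{lower order in } s)$, where the $c(x,t)$ term and the terms coming from $a \in W^{2,\infty}$, $a a_{xx} \in W^{1,\infty}$ are absorbed as lower-order perturbations (this is exactly where the regularity hypotheses \eqref{hipreg} are used). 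Taking $L^2(Q)$ norms and expanding $\|P_1 w + P_2 w\|^2 = \|P_1 w\|^2 + \|P_2 w\|^2 + 2(P_1 w, P_2 w)$, the crux is to compute the cross term $2(P_1 w, P_2 w)$ through integration by parts in both $x$ and $t$.

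The double product produces a sum of interior integrals plus boundary terms. The boundary terms at $t=0,T$ vanish by the choice of $\theta$; the boundary terms at $x=0,1$ vanish because $w(0,\cdot)=w(1,\cdot)=0$ and $a(0),a(1)>0$ (here the non-integrability condition $1/a \notin L^1$ near the endpoints guarantees the functions in $H^1_a$ behave well, and since $a$ does not degenerate at $0,1$ there is no issue — the only degeneracy is interior and is covered by $\omega_\delta$). The dominant interior contributions, after choosing $\eta(x) = -(x-x_0)^2/2$ so that $\eta'' = -1 < 0$ and $|\eta'(x)| a(x) \geq C$ off $\omega_\delta$ by \eqref{hip3}, are the positive terms $s^3\lambda^4 \iint \xi^3 |\eta'|^4 |w|^2$ and $s\lambda^2 \iint \xi a |\eta'|^2 |w_x|^2$ and $s\lambda \iint \xi a |w_x|^2$ (the last using $\eta''<0$), all localized to $[0,1]\setminus\omega_\delta$ where $|\eta'|$ is bounded below. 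Absorbing the remaining error terms requires $s \geq s_0$ and $\lambda \geq \lambda_0$ large, which controls lower powers of $s$ and $\lambda$; in the degenerate region $\omega_\delta$ everything is simply thrown to the right-hand side as an integral over $\omega_\delta$.

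Next I would recover the gradient and second-order terms: $\|P_1 w\|^2$ controls $s\lambda^2 \iint \xi a |\eta'|^2 |w_x|^2$ up to already-controlled terms, while writing $(a w_x)_x = P_1 w - s^2\lambda^2 \xi^2 |\eta'|^2 a w - (\ldots)$ lets me bound $\iint s^{-1}\lambda^{-1}\xi^{-1}|(a w_x)_x|^2$, and similarly $w_t = P_2 w + (\ldots)$ bounds $\iint s^{-1}\lambda^{-1}\xi^{-1}|w_t|^2$; reverting to $v$ via $v = e^{s\sigma}w$ and the identities $v_x = e^{s\sigma}(w_x - s\sigma_x w)$, $v_t = e^{s\sigma}(w_t - s\sigma_t w)$, $(a v_x)_x = e^{s\sigma}((aw_x)_x - \ldots)$ converts the estimate on $w$ into \eqref{eq18a}. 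Finally, the observation term: the left-hand side integrates over all of $Q$, but the weighted energy estimates above only give a lower bound away from $\omega_\delta$; to include $\omega_\delta$ one introduces a cutoff $\chi \in C_c^\infty(\omega)$ with $\chi \equiv 1$ on $\omega_\delta$, applies the estimate to $\chi v$ or uses a Caccioppoli-type inequality on $\omega$, and absorbs the local gradient term $\int_0^T\int_{\omega_\delta} \xi a |v_x|^2$ into $s^3\lambda^4 \int_0^T\int_\omega \xi^3 |v|^2$; since $a$ is bounded on $\omega$, there is no degeneracy obstruction here and the standard argument applies verbatim.

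I expect the main obstacle to be the bookkeeping in the cross term $2(P_1 w, P_2 w)$: one must track every integration-by-parts term, verify that the hypotheses $a \in W^{2,\infty}$ and $a a_{xx} \in W^{1,\infty}$ are exactly strong enough to make the "bad" terms lower order (the product $a a_{xx}$ appears precisely when differentiating $a w_x$ twice against the weight), and confirm that no term concentrated on $[A,B]$ survives with the wrong sign — which is guaranteed because $\omega_\delta \supset [A,B]$ lets us discard all of $\omega_\delta$ to the right-hand side. The secondary difficulty is handling the interior transition region between $\omega_\delta$ and where $a \geq m_\delta$: there $a$ may be small but positive, and one needs \eqref{hip1}--\eqref{hip3} to keep the coercive terms genuinely positive.
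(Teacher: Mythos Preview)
Your proposal is correct and follows essentially the same route as the paper: the conjugation $z=e^{-s\sigma}v$, the split $P^+z+P^-z=G$, the expansion of the cross term by integrations by parts, the use of \eqref{hip3} to get coercivity of $s^3\lambda^4\xi^3|z|^2$ and $s\lambda^2\xi a|z_x|^2$ off $\omega_\delta$, and the final cutoff/Caccioppoli step to absorb the local gradient term over $\omega_\delta$ all match the paper exactly. Two small corrections to keep in mind when writing it up: the boundary contribution at $x=0,1$ does not vanish (it involves $|z_x|^2$, not $z$) but has the right sign because $\eta'(0)>0>\eta'(1)$; and the term carrying $\eta''=-1$ actually enters with a \emph{negative} sign, so the gradient coercivity comes solely from the $s\lambda^2\xi|\eta'|^2 a^2|z_x|^2$ term combined with \eqref{hip3}, the $s\lambda$ piece being absorbed for large $\lambda$.
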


Notice that it suffices to prove Theorem \ref{th.carleman} for $c=0$, since the general case follows taking $\tilde{h}=h+cv$.

Indeed, for each $s>0$, we will consider the change of variables 
\[
z=e^{-s\sigma}v.
\]
We notice that
$z=0$ in $\partial Q$, and simple computations give us 
\[v_t=e^{s\sigma}(s\sigma_tz+z_t)\]
and
\[
(a(x)v_x)_x=e^{s\sigma}[s^2|\sigma_x|^2a(x)z+2s\sigma_xa(x)z_x+s(\sigma_xa(x))_xz+(a(x)z_x)_x].
\]
Consequently, from \eqref{prob0'}, it is clear that $z$ is a solution of
\begin{equation}\label{eq1}
\begin{cases}
P^+z+P^-z=G, & \text{ in } Q,\\
z=0, & \text{ in } \partial Q,
\end{cases}
\end{equation}
where 
\[
\begin{array}{cc}P^-z:=2s(\sigma_xa(x))_xz+2s\sigma_xa(x)z_x+z_t:=I_{11}+I_{12}+I_{13},
\\ P^+z:=s^2|\sigma_x|^2a(x)z+[a(x)z_x]_x+s\sigma_tz:=I_{21}+I_{22}+I_{23}\end{array}
\]
and 
\[
G=e^{-s\sigma}h+s(\sigma_xa(x))_xz.
\]
By using \eqref{eq1}, we can arrive at \begin{equation}
\label{eq2}\|P^-z\|_2^2+\|P^+z\|_2^2+2(\!(P^-z,P^+z)\!)=\|G\|_2^2,\end{equation}
where the norm in  $L^2(Q)$ will be denoted by  $\|\cdot\|_2$ and its inner product by $(\!(\cdot,\cdot)\!)$. 

Next, we will deal with each term on the left side of \eqref{eq2}, in order to achieve the following result.
\begin{prop}
 Assume that the function $a$ satisfies \eqref{hipreg} and \eqref{hipdeg}. If $\omega$ satisfies \eqref{hipgeo2}, then there exist positive constants $C$, $s_0$ and $\lambda_0$, only depending on $T$, $a$, $c$ and $\omega$, such that, for any $s\geq s_0$, $\lambda \geq \lambda_0$ and $z$ solution of \eqref{eq1}, we have
\begin{align}\label{eq18}
\iint_Q[s^{-1}\xi^{-1}[|z_t|^2+|[a(x)z_x]_x|^2]+s^3\lambda^4\xi^3|z|^2+s\lambda^2\xi a(x)|z_x|^2]\,dx\,dt\nonumber\\ \leq C\left[\|e^{-s\sigma}h\|_2^2+\int_0^T\int_{\omega_\delta}s^3\lambda^4\xi^3|z|^2\,dx\,dt\right].
\end{align}
\end{prop}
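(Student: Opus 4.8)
The strategy is the classical Fursikov–Imanuvilov splitting already set up in the excerpt: expand the cross term $2(\!(P^-z,P^+z)\!)$ in \eqref{eq2} into the nine integrals $(\!(I_{1i},I_{2j})\!)$, integrate by parts in $x$ and $t$ to convert each into boundary terms plus distributed terms, and then show that the ``good'' distributed terms dominate. The target lower bound on the left of \eqref{eq18} is the weighted combination $s^3\lambda^4\xi^3|z|^2 + s\lambda^2\xi a|z_x|^2 + s^{-1}\xi^{-1}(|z_t|^2+|(az_x)_x|^2)$; the first two come out of the cross terms, while the last two are recovered afterwards by writing $z_t = P^-z - I_{11} - I_{12}$ and $(az_x)_x = P^+z - I_{21} - I_{23}$, estimating each piece by what has already been bounded, and absorbing. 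I would first record the elementary identities: $\sigma_x = -\xi_x = \lambda\eta'\xi$, $\sigma_t = -\xi_t$ with $|\xi_t|\le C\theta^{5/4}e^{\ldots}\le CT^{?}\xi^{5/4}$ (hence $|\sigma_t|\lesssim \xi^{5/4}$ and $|\sigma_{tt}|\lesssim \xi^{3/2}$), and $|\eta'|=|x-x_0|$ bounded above and, crucially via \eqref{hip3}, bounded below by $C/a$ away from $\omega_\delta$. These let one trade powers of $a$ against powers of $\lambda\xi$ outside $\omega_\delta$.

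**Key steps in order.** (1) Compute $(\!(I_{13},I_{21})\!)$ and $(\!(I_{12},I_{22})\!)$, the two terms producing the dominant positive quantities: after integration by parts the first yields $+\tfrac12 s^3\iint \partial_t(|\sigma_x|^2 a)\,|z|^2$ type expressions and, combined with the second (which after two integrations by parts in $x$ produces $+s\lambda^2\iint \xi(\eta')^2 a|z_x|^2$ and, from the derivative hitting $\sigma_x a$, a term $\sim s^3\lambda^4\iint \xi^3(\eta')^4 a |z|^2$), gives the coefficients $s^3\lambda^4\xi^3$ on $|z|^2$ and $s\lambda^2\xi a$ on $|z_x|^2$ — but only weighted by powers of $|\eta'|^2=(x-x_0)^2$, which degenerates precisely on $[A,B]$. (2) On $[0,1]\setminus\omega_\delta$ this is harmless since $(\eta')^2\ge c>0$ there; the whole difficulty is the region $\omega_\delta\setminus[A,B]$ and, more subtly, a neighborhood of $[A,B]$, where $(\eta')^2$ is small but $a$ is also small. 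Here one uses \eqref{hip3} / \eqref{hip1}: on $\omega_\delta$ the local terms may all be thrown onto the right-hand side $s^3\lambda^4\int_0^T\int_{\omega_\delta}\xi^3|z|^2$, which is exactly the observation term allowed in \eqref{eq18}; so the estimate is only really needed on $[0,1]\setminus\omega_\delta$, where $a\ge m_\delta$ and $(\eta')^2\ge c$ simultaneously, and everything is uniformly nondegenerate. (3) Bound all remaining cross terms $(\!(I_{1i},I_{2j})\!)$ — in particular those involving $\sigma_t$, $(\sigma_x a)_x$, $(\sigma_x a)_{xx}$ and $((\sigma_x a)_x a)_x$ — by $\ep$ times the good terms plus $C_\ep$ (lower-order powers of $s,\lambda$) times the same good terms, using the regularity $a\in W^{2,\infty}$ and $aa_{xx}\in W^{1,\infty}$ from \eqref{hipreg} precisely to make sense of $(\sigma_x a)_{xx} = (\lambda\eta'\xi a)_{xx}$, whose worst piece is $\lambda\eta'\xi a_{xx} = \lambda\eta'\xi\cdot a^{-1}(aa_{xx})$ — finite only because $1/a\notin L^1$ is compensated... (4) Choose $\lambda_0$ large to beat the $C_\ep$ constants, then $s_0$ large, then absorb. (5) Recover $s^{-1}\xi^{-1}|z_t|^2$ and $s^{-1}\xi^{-1}|(az_x)_x|^2$ as described above. (6) Handle the $G$-term: $\|G\|_2^2 \le 2\|e^{-s\sigma}h\|_2^2 + 2s^2\|(\sigma_x a)_x z\|_2^2$, and the last is absorbed into $s^3\lambda^4\iint\xi^3|z|^2$ for $s$ large since $(\sigma_x a)_x = \lambda\eta'\xi a_x + \lambda\eta'_x\xi a + \lambda^2(\eta')^2\xi a$ has size $\lesssim \lambda^2\xi$.

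**Main obstacle.** The genuinely delicate point — and the reason \eqref{hipreg} is imposed — is controlling the terms where two or three spatial derivatives fall on the weight, i.e. quantities built from $(\sigma_x a)_{xx}$ and $((\sigma_x a)_x a)_x$, because these generate factors $a_{xx}$ and products like $a a_{xx}$, $(a a_{xx})_x$, which are unbounded near $[A,B]$ for a generic super-strong degeneracy but are tamed exactly by the hypotheses $a\in W^{2,\infty}$ and $aa_{xx}\in W^{1,\infty}$; moreover these bad terms must be controlled by $s\lambda^2\xi a|z_x|^2$ and $s^3\lambda^4\xi^3|z|^2$ whose coefficients themselves degenerate like $(x-x_0)^2$ and $a$ near $[A,B]$, so one cannot simply absorb pointwise and must instead route the near-$[A,B]$ contributions into the $\int_{\omega_\delta}$ observation integral. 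Threading that trade-off — local terms to the right-hand side on $\omega_\delta$, nondegenerate absorption on the complement, and the second-derivative weight terms controlled by $aa_{xx}\in W^{1,\infty}$ — is the crux; the rest is bookkeeping of the nine cross products and the standard $\lambda$-then-$s$ absorption.
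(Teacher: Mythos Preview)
Your overall plan is correct and is essentially the paper's own argument: expand $(\!(P^-z,P^+z)\!)$ into the nine products $(\!(I_{1i},I_{2j})\!)$, extract the positive distributed terms, use the nondegeneracy \eqref{hip1}--\eqref{hip3} on $[0,1]\setminus\omega_\delta$ to replace $|\eta'|^4a^2$ and $|\eta'|^2a^2$ by constants, push the $\omega_\delta$-remainders to the right, then recover $s^{-1}\xi^{-1}|z_t|^2$ and $s^{-1}\xi^{-1}|(az_x)_x|^2$ from the operator definitions, and finally absorb $\|G\|_2^2$. Your identification of the role of $aa_{xx}\in W^{1,\infty}$ is exactly right: it is needed to bound $\big[[\eta'a]_{xx}\,a\big]_x$, which appears after integrating $J_4$ by parts.

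Two points in your write-up are inaccurate and would trip you up if you executed the plan literally:

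\textbf{(i) Wrong attribution of the dominant positive terms.} The product $(\!(I_{13},I_{21})\!)=s^2\iint |\sigma_x|^2 a\, z z_t$ is of order $s^2$, not $s^3$; it yields only $-s^2\lambda^2\iint\xi\xi_t|\eta'|^2a|z|^2$, which is lower order. Likewise $(\!(I_{12},I_{22})\!)$ is of order $s$ and cannot produce an $s^3\lambda^4$ term. The leading $+s^3\lambda^4\iint\xi^3|\eta'|^4a^2|z|^2$ actually comes from the combination $(\!(I_{11},I_{21})\!)+(\!(I_{12},I_{21})\!)$ (the $-2+3=+1$ cancellation), and the leading gradient term $s\lambda^2\iint\xi|\eta'|^2a^2|z_x|^2$ from $(\!(I_{11},I_{22})\!)$ and $(\!(I_{12},I_{22})\!)$ together. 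Also note the sign: $\sigma_x=-\xi_x=-\lambda\eta'\xi$, not $+\lambda\eta'\xi$.

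\textbf{(ii) Missing cut-off step.} You cannot directly ``throw'' the local gradient term $s\lambda^2\int_0^T\!\int_{\omega_\delta}\xi a|z_x|^2$ into the observation integral $s^3\lambda^4\int_0^T\!\int_{\omega_\delta}\xi^3|z|^2$ as stated in step (2); it first appears on the right of an intermediate estimate (the paper's \eqref{eq17}). A separate argument with a cut-off $\varphi\in C^\infty_c(\omega)$, $\varphi\equiv 1$ on $\omega_\delta$, and an integration by parts in $x$ is needed to trade $\int_{\omega_\delta}\xi a|z_x|^2$ for $\int_{\omega}\xi^3|z|^2$ plus an $\ep s^{-1}\iint\xi^{-1}|(az_x)_x|^2$ that is absorbed on the left. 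Without this step the final inequality \eqref{eq18} does not close.

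Your ``main obstacle'' paragraph also overcomplicates matters: there is no subtle pointwise competition near $[A,B]$, because the good terms are only asked to dominate on $[0,1]\setminus\omega_\delta$, where $a\ge m_\delta$ and $|\eta'|\ge c_\delta$ simultaneously; everything on $\omega_\delta$ is simply moved to the right. The $W^{1,\infty}$ bound on $aa_{xx}$ is used only to make the integration-by-parts coefficients in $(\!(I_{11},I_{22})\!)$ bounded, not to rescue a degenerate absorption.
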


\begin{proof}
The beginning of this proof is focused on estimating $(\!(P^-z,P^+z)\!)$. 
\medskip

\textbf{\underline{Part I}:} Estimate for $(\!(P^-z, I_{21})\!)$.

Since
\[\xi_x=\lambda \eta'\xi \text{ and } \sigma_x=-\xi_x=-\lambda\eta'\xi,\]
we take
\[(\!(I_{11},I_{21})\!)=\iint_Q[-2s^3\lambda^4\xi^3|\eta'|^4|a(x)|^{{\color{black} 2}}|z|^2-2s^3\lambda^3\xi^3|\eta'|^2{\color{black}a(x)}(\eta'a(x))_x|z|^2]\,dx\,dt,\]
\begin{align*}
(\!(I_{12},I_{21})\!)&=s^3\iint_Q|\sigma_x|^3|a(x)|^2(|z|^2)_x\,dx\,dt\\ &=3s^3\lambda^4\iint_Q\xi^3|\eta'|^4|a(x)|^2|z|^2\,dx\,dt+s^3\lambda^3\iint_Q\xi^3({{\color{black}(\eta')^3a^2(x)}})_x|z|^2\,dx\,dt
\end{align*}
and
\[(\!(I_{13},I_{21})\!)=-s^2\lambda^2\iint_Q\xi\xi_t|\eta'|^2a(x)|z|^2\,dx\,dt.\]
Thus 
\begin{multline*}
(\!(P^-z,I_{21})\!)= s^3\lambda^4\iint_Q\xi^3|\eta'|^4a^{\color{black}2}(x)|z|^2\,dx\,dt\\+s^3\lambda^3 \iint_Q\xi^3[({\color{black}(\eta')^3a^2}(x))_x-2|\eta'|^2{\color{black}a(x)}(\eta' a(x))_x]|z|^2\,dx\,dt\\ -s^2\lambda^2\iint_Q\xi\xi_t|\eta'|^2a(x)|z|^2\,dx\,dt.
\end{multline*} 

Recalling that $a\in W^{2,\infty} (0,1)\hookrightarrow C^1([0,1])$, we know that 
\[({\color{black}(\eta')^3a^2}(x))_x- 2|\eta'|^2{\color{black}a(x)}(\eta' a(x))_x \text{ and } |\eta'|^2 a(x) \text{ are bounded}.\]
At the same time,  we have $|\xi\xi_t|\leq C\xi^3$. Therefore, for sufficiently large   $\lambda$ and $s$, we can deduce that 
\[(\!(P^-z,I_{21})\!)\geq s^3\lambda^4\iint_Q\xi^3|\eta'|^4a^2(x)|z|^2\,dx\,dt-Cs^3\lambda^3\iint_Q\xi^3|z|^2\,dx\,dt.\]

Furthermore, from \eqref{hip3}, we obtain  
\begin{align*}
s^3\lambda^4\iint_Q\xi^3|\eta'|^4a(x)|z|^2\,dx\,dt&\geq  Cs^3\lambda^4\int_0^T\int_{[0,1]-\omega_\delta}\xi^3|\eta'|^4a(x)|z|^2\,dx\,dt\\ &\geq Cs^3\lambda^4\int_0^T\int_{[0,1]-\omega_\delta}\xi^3|z|^2\,dx\,dt\\ &= Cs^3\lambda^4\iint_Q\xi^3|z|^2\,dx\,dt-Cs^3\lambda^4\int_0^T\int_{\omega_\delta}\xi^3|z|^2\,dx\,dt.
\end{align*}

Thus, if $s$ and $\lambda$ are sufficiently large,
\begin{equation}
\label{eq3}(\!(P^-z, I_{21})\!)\geq Cs^3\lambda^4\iint_Q\xi^3|z|^2\,dx\,dt- Cs^3\lambda^4\int_0^T\int_{\omega_\delta}\xi^3|z|^2\,dx\,dt
\end{equation}
holds. 
\medskip 

\textbf{\underline{Part II}:} Estimate for $(\!(P^-z, I_{23})\!)$. 

Let us observe that
\[(\!(I_{11},I_{23})\!)=-2s^2\lambda\iint_Q\xi\sigma_t[\lambda|\eta'|^2a(x)+(\eta'a(x))_x]|z|^2\,dx\,dt,\]
\[(\!(I_{12},I_{23})\!)=s^2\lambda\iint_Q\xi[\lambda(\sigma_t-\xi_t)|\eta'|^2a(x)+\sigma_t(\eta'a(x))_x]|z|^2\,dx\,dt\]
and
\[(\!(I_{13},I_{23})\!)=-\frac{s}{2}\iint_Q\sigma_{tt}|z|^2\,dx\,dt.\]
Since $a\in C^1([0,1])$,  $|\xi_t|,|\sigma_t|\leq C\xi^2$ and $\sigma_{tt}\leq C\xi^3$, we can proceed as before and  use \eqref{eq3} to deduce that 
\begin{equation}
\label{eq4}(\!(P^-z, I_{21}+I_{23})\!)\geq Cs^3\lambda^4\iint_Q\xi^3|z|^2\,dx\,dt- Cs^3\lambda^4\int_0^T\int_{\omega_\delta}\xi^3|z|^2\,dx\,dt.
\end{equation}

\textbf{\underline{Part III}:} Estimate for $(\!(P^-z, I_{22})\!)$. 

Initially, we have 
\begin{equation}
\label{eq5}(\!(I_{11},I_{22})\!)=-2s\lambda\iint_Q[\lambda\xi|\eta'|^2a(x)z[a(x)z_x]_x+\xi[\eta'a(x)]_xz[a(x)z_x]_x]\,dx\,dt.
\end{equation}

For the first term on the right side of \eqref{eq5}, we can write
\begin{align*}-2s\lambda^2\iint_Q\xi|\eta'|^2a(x)z[a(x)z_x]_x\,dx\,dt&=2s\lambda^3\iint_Q\xi(\eta')^3|a(x)|^2zz_x\,dx\,dt\\ &+2s\lambda^2\iint_Q\xi[|\eta'|^2a(x)]_xza(x)z_x\,dx\,dt\\ &+2s\lambda^2\iint_Q\xi|\eta'|^2|a(x)|^2|z_x|^2\,dx\,dt\\
&=:J_1 +J_2++2s\lambda^2\iint_Q\xi|\eta'|^2|a(x)|^2|z_x|^2\,dx\,dt,
\end{align*}
where
\begin{align*}
J_1=2s\lambda^3\iint_Q\xi(\eta')^3|a(x)|^2zz_x\,dx\,dt&= -s\lambda^3\iint_Q\xi[\lambda|\eta'|^4|a(x)|^2+(|\eta'|^3|a(x)|^2)_x]|z|^2\,dx\,dt
\end{align*}
  and 
\begin{multline*}
J_2 =2s\lambda^2\iint_Q\xi\big(|\eta'|^2a(x)\big)_xazz_x\,dx\,dt\\=-s\lambda^2\iint_Q\xi\Bigg(\lambda\eta'a(x)\left[|\eta'|^2a(x)\right]_x+\Big(\left[|\eta'|^2a(x)\right]_xa(x)\Big)_x\Bigg)|z|^2\,dx\,dt.
\end{multline*}

Likewise, for the second term on the right side of \eqref{eq5}, we have
\begin{align*}
-2s\lambda\iint_Q\xi[\eta'a(x)]_xz[a(x)z_x]_x\,dx\,dt&=2s\lambda^2\iint_Q\xi a(x)\eta'[\eta'a(x)]_xzz_x\,dx\,dt\\ &+2s\lambda\iint_Q {\color{black}\xi a(x)} [\eta'a(x)]_{xx}zz_x\,dx\,dt\\ &+2s\lambda\iint_Q\xi[\eta'a(x)]_xa(x)|z_x|^2\,dx\,dt \\
&=: J_3+J_4+2s\lambda\iint_Q\xi[\eta'a(x)]_xa(x)|z_x|^2\,dx\,dt,
\end{align*}
where
\begin{align*}
J_3&=2s\lambda^2\iint_Q\xi a(x)\eta'[\eta'a(x)]_xzz_x\,dx\,dt\\
&=-s\lambda^2\iint_Q\xi[\lambda|\eta'|^2a(x)[\eta'a(x)]_x+[\eta'a(x)[\eta'a(x)]_x]_x]|z|^2\,dx\,dt
\end{align*}
and
\begin{align*}
J_4&=2s\lambda\iint_Q {\color{black}\xi a(x)} [\eta'a(x)]_{xx}zz_x\,dx\,dt\\ 
&=-s\lambda\iint_Q\xi[\lambda\eta'a(x)[\eta'a(x)]_{xx}+[[\eta'a(x)]_{xx}a(x)]_x]|z|^2\,dx\,dt.
\end{align*}

Combining these estimates we can conclude that
\begin{align}\label{eq6}
(\!(I_{11},I_{22})\!)&\geq-Cs\lambda^4\iint_Q\xi|z|^2\,dx\,dt+2s\lambda^2\iint_Q\xi|\eta'|^2|a(x)|^2|z_x|^2\,dx\,dt\nonumber\\ &+2s\lambda\iint_Q\xi[\eta'a(x)]_xa(x)|z_x|^2\,dx\,dt.
\end{align}

Arguing as before we deduce that
\begin{align}\label{eq7}
& 2s\lambda^2\iint_Q\xi|\eta'|^2|a(x)|^2|z_x|^2\,dx\,dt\\ 
&\quad \geq s\lambda^2\iint_Q\xi a(x)|z_x|^2\,dx\,dt-Cs\lambda^2\int_0^T\int_{\omega_\delta}\xi a(x)|z_x|^2\,dx\,dt.
\end{align}

Furthermore,
\begin{align}\label{eq8}
(\!(I_{13},I_{22})\!)&=-\iint_Qz[a(x)z_{xt}]_x\,dx\,dt=\iint_Qz_xa(x)z_{xt}\,dx\,dt\nonumber\\ &=\frac{1}{2}\iint_Q[a(x)|z_x|^2]_t\,dx\,dt=0.
\end{align}
Hence, from \eqref{eq6}-\eqref{eq8} we obtain 
\begin{align}\label{eq9}
(\!(I_{11}+I_{13},I_{22})\!)&\geq-Cs\lambda^4\iint_Q\xi|z|^2\,dx\,dt-Cs\lambda^2\int_0^T\int_{\omega_\delta}\xi a(x)|z_x|^2\,dx\,dt\nonumber\\ &+Cs\lambda^2\iint_Q\xi a(x)|z_x|^2\,dx\,dt.
\end{align}

Finally,
\begin{align}\label{eq10}
(\!(I_{12},I_{22})\!)&=s\lambda^2\iint_Q\xi|\eta'|^2|a(x)|^2|z_x|^2\,dx\,dt+s\lambda\iint_Q\xi\eta''|a(x)|^2|z_x|^2\,dx\,dt\nonumber\\&-s\lambda\int_0^T\xi\eta'|a(x)|^2|z_x|^2|_{x=0}^{x=1}\,dt
\end{align}

Since $\eta'(1)<0$ and $\eta'(0)>0$, the boundary term on \eqref{eq10} is $\geq0$. The other terms can be controlled as before. This led us to 
\begin{align}\label{eq11}
(\!(P^-,I_{22})\!)&\geq-Cs\lambda^4\iint_Q\xi|z|^2\,dx\,dt-Cs\lambda^2\int_0^T\int_{\omega_\delta}\xi a(x)|z_x|^2\,dx\,dt\nonumber\\ &+Cs\lambda^2\iint_Q\xi a(x)|z_x|^2\,dx\,dt.
\end{align}

Combining \eqref{eq4} and \eqref{eq11} we conclude that
\begin{align}\label{eq12}
(\!(P^-z,P^+z)\!)&\geq C\iint_Q[s^3\lambda^4\xi^3|z|^2+s\lambda^2\xi a(x)|z_x|^2]\,dx\,dt\nonumber\\& -C\int_0^T\int_{\omega_\delta}[s^3\lambda^4\xi^3|z|^2+s\lambda^2\xi a(x)|z_x|^2]\,dx\,dt.
\end{align}

From \eqref{eq2} and \eqref{eq12} we have that
\begin{align}\label{eq13}
\|P^-z\|_2^2+\|P^+z\|_2^2+\iint_Q[s^3\lambda^4\xi^3|z|^2+s\lambda^2\xi a(x)|z_x|^2]\,dx\,dt\nonumber\\ \leq C\left[\|G\|_2^2+\int_0^T\int_{\omega_\delta}[s^3\lambda^4\xi^3|z|^2+s\lambda^2\xi a(x)|z_x|^2]\,dx\,dt\right].
\end{align}

Now, using the definitions of $P^-z$, $P^+z$ and $G$ we obtain that
\begin{align}\label{eq14}
s^{-1}\iint_Q\xi^{-1}|z_t|^2\,dx\,dt&\leq s^{-1}\|P^-z\|_2^2+Cs\lambda^4\iint_Q\xi|z|^2\,dx\,dt+Cs\lambda\iint_Q\xi a(x)|z_x|^2\,dx\,dt\nonumber\\ &\leq C\left[\|G\|_2^2+\int_0^T\int_{\omega_\delta}[s^3\lambda^4\xi^3|z|^2+s\lambda^2\xi a(x)|z_x|^2]\,dx\,dt\right], 
\end{align}

\begin{align}\label{eq15}
& s^{-1}\iint_Q\xi^{-1}|[a(x)z_x]_x|^2\,dx\,dt\\ 
&\quad \leq s^{-1}\|P^+z\|_2^2+Cs^3\lambda^4\iint_Q\xi^3|z|^2\,dx\,dt+Cs\iint_Q\xi |z|^2\,dx\,dt\nonumber\\ 
&\quad \leq C\left[\|G\|_2^2+\int_0^T\int_{\omega_\delta}[s^3\lambda^4\xi^3|z|^2+s\lambda^2\xi a(x)|z_x|^2]\,dx\,dt\right],
\end{align}
and
\begin{align}\label{eq16}
\|G\|_2^2\leq \|e^{-s\sigma}h\|_2^2+Cs^2\lambda^4\iint_Q\xi^2|z|^2\,dx\,dt.
\end{align}

By combining \eqref{eq13}-\eqref{eq16} we can deduce that
\begin{align}\label{eq17}
\iint_Q[s^{-1}\xi^{-1}[|z_t|^2+|[a(x)z_x]_x|^2]+s^3\lambda^4\xi^3|z|^2+s\lambda^2\xi a(x)|z_x|^2]\,dx\,dt\nonumber\\ \leq C\left[\|e^{-s\sigma}h\|_2^2+\int_0^T\int_{\omega_\delta}[s^3\lambda^4\xi^3|z|^2+s\lambda^2\xi a(x)|z_x|^2]\,dx\,dt\right].
\end{align}

Since $\omega_\delta\subset\subset\omega$, we can take $\omega_\delta\subset\subset D_2\subset\subset\omega$ and a cut-off function $\varphi\in C^\infty([0,1])$ such that $0\leq\varphi\leq1$, $\varphi=1$ in $\omega_\delta$ and $\varphi=0$ in $[0,1]-D_2$. Hence, for $\varepsilon>0$, we have that
\begin{align*}
& s\lambda^2\int_0^T\int_{\omega_\delta}\xi a(x)|z_x|^2\,dx\,dt\\
&\leq s\lambda^2\int_0^T\int_{D_2}\xi\varphi a(x)z_xz_x\,dx\,dt\\
&=-s\lambda^2\int_0^T\int_{D_2}
 \left(\lambda\xi\eta'\varphi a(x)zz_x+\xi\varphi'a(x)zz_x+\xi\varphi[a(x)z_x]_xz\right)\,dx\,dt,
\end{align*}
\begin{align*}
-s\lambda^3\int_0^T\int_{D_2}\xi\eta'\varphi a(x)zz_x dx\,dt \leq C\iint_Q[s^2\lambda^4\xi^2|z|^2+\lambda^2a(x)|z_x|^2]\,dx\,dt,
\end{align*}
\begin{align*}
-s\lambda^2\int_0^T\int_{D_2} \xi\varphi'a(x)zz_x dx\,dt\leq C\iint_Q[s^2\lambda^4\xi^2|z|^2+a(x)|z_x|^2]\,dx\,dt
\end{align*}
and
\begin{align*}
& -s\lambda^2\int_0^T\int_{D_2}\xi\varphi[a(x)z_x]_xz\,dx\,dt\\
& \quad \leq C\varepsilon^{-1}s^3\lambda^4\int_0^T\int_\omega\xi^3|z|^2\,dx\,dt+\varepsilon s^{-1}\iint_Q\xi^{-1}|[a(x)z_x]_x|^2\,dx\,dt.
\end{align*}

Taking $\varepsilon >0$ sufficiently small, these last estimates together with \eqref{eq17} give us \eqref{eq18}.
\end{proof}

Coming back to the original variable $v$ we obtain Theorem \ref{th.carleman}.

    \begin{cor}[observability inequality]\label{cor.obs}
Assume that the function $a$ satisfies \eqref{hipreg} and \eqref{hipdeg}. If $\omega$ satisfies \eqref{hipgeo2}, then there exists a constant $C>0$ such that, for any $v_T\in L^2(0,1)$ and $v$ solution of \eqref{prob0'} with $h=0$, one has
	\begin{equation}
    	\label{obs}\|v(\cdot,0)\|_{L^2(0,1)}^2\leq C\iint_{\omega_T}e^{-2s\sigma}\xi^3|v|^2\,dx\,dt,
	\end{equation}
 {where we recall that $\omega_T=\omega\times (0,T)$.}
\end{cor}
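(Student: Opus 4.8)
The plan is to derive the observability inequality \eqref{obs} from the Carleman estimate \eqref{eq18a} (Theorem \ref{th.carleman}) applied with $h=0$, by absorbing the weights into manageable constants away from the endpoints $t=0,T$ and combining with an energy estimate for the adjoint system \eqref{prob0'}.

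First I would restrict the time integration on the left-hand side of \eqref{eq18a} to a compact subinterval $[T/4,3T/4]$. On this set the weight functions $\theta$, $\xi$ and $\sigma$ are bounded above and below by positive constants depending only on $T$ and $\lambda$ (with $s$ now fixed, say $s=s_0$, $\lambda=\lambda_0$), so that dropping the first two (nonnegative) bracketed terms and keeping only $s^3\lambda^4\xi^3|v|^2$ yields
\begin{equation*}
\int_{T/4}^{3T/4}\!\!\int_0^1 |v(x,t)|^2\,dx\,dt \leq C\iint_{\omega_T} e^{-2s\sigma}\xi^3|v|^2\,dx\,dt,
\end{equation*}
where on the right I have simply enlarged $\omega_\delta$ to $\omega$ and kept the weight $e^{-2s\sigma}\xi^3$ as it appears (since $\omega_\delta\subset\subset\omega$, or one bounds the weight on $\omega_\delta$ by a constant and reintroduces it — either convention matches the statement \eqref{obs}). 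Thus there is some $t_\ast\in[T/4,3T/4]$ with $\|v(\cdot,t_\ast)\|_{L^2(0,1)}^2 \leq \frac{C}{T}\iint_{\omega_T}e^{-2s\sigma}\xi^3|v|^2\,dx\,dt$, or, avoiding the mean-value step, one integrates the energy inequality below against $t_\ast\in[T/4,3T/4]$.

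Next I would use a standard dissipativity/energy estimate for \eqref{prob0'} with $h=0$: multiplying the equation by $v$ and integrating by parts on $(0,1)$ gives $\frac{d}{dt}\|v(\cdot,t)\|_{L^2(0,1)}^2 = 2\|a^{1/2}v_x\|_{L^2(0,1)}^2 + 2\iint cv^2 \geq -2\|c\|_{L^\infty(Q)}\|v(\cdot,t)\|_{L^2(0,1)}^2$ (the degenerate term $\int a|v_x|^2$ has a favorable sign because \eqref{prob0'} runs backward in time, i.e. $+(av_x)_x$), so by Gronwall $t\mapsto e^{2\|c\|_\infty t}\|v(\cdot,t)\|_{L^2(0,1)}^2$ is nondecreasing; in particular $\|v(\cdot,0)\|_{L^2(0,1)}^2 \leq e^{2\|c\|_\infty t_\ast}\|v(\cdot,t_\ast)\|_{L^2(0,1)}^2 \leq e^{3\|c\|_\infty T/2}\|v(\cdot,t_\ast)\|_{L^2(0,1)}^2$. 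Chaining this with the previous bound (integrated in $t_\ast$ over $[T/4,3T/4]$ to avoid choosing a specific time) produces \eqref{obs}.

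The main obstacle — really the only delicate point — is bookkeeping the constants: one must check that on $[T/4,3T/4]$ the weights $\theta,\xi,e^{-2s\sigma}$ are indeed bounded away from $0$ and $\infty$ uniformly (they are, being continuous and positive there, with the blow-up of $\theta$ confined to the endpoints), and that the well-posedness class $v\in C^0([0,T];L^2)$ from Proposition \ref{well-pos} (applied to the backward problem) makes both the energy identity and the evaluation $v(\cdot,0)$, $v(\cdot,t_\ast)$ legitimate. No new degenerate-analysis difficulty arises here since all the degeneracy has already been handled inside Theorem \ref{th.carleman}.
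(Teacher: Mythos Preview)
Your proposal is correct and follows essentially the same approach as the paper's proof: both apply the Carleman estimate \eqref{eq18a} with $h=0$, restrict the left-hand side to $[T/4,3T/4]$ where the weights are bounded above and below, enlarge $\omega_\delta$ to $\omega$ on the right, and combine with the Gronwall-type energy estimate $\|v(\cdot,0)\|_{L^2}^2\leq e^{2Ct}\|v(\cdot,t)\|_{L^2}^2$ obtained by multiplying \eqref{prob0'} by $v$. The paper chooses the ``integrate in $t_\ast$ over $[T/4,3T/4]$'' route you mention rather than the mean-value variant, but this is exactly the alternative you already note.
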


\begin{proof}
	From Theorem \ref{th.carleman}, since $\omega_\delta\subset \omega$, we have that 
\begin{equation}\label{eq5.0}
    s^3\lambda^4\iint_Qe^{-2s\sigma}\xi^3|v|^2\,dx\,dt\leq Cs^3\lambda^4\int_0^T\int_\omega e^{-2s\sigma}\xi^3|v|^2\,dx\,dt.
\end{equation}	
	
Multiplying the equation in \eqref{prob0'} by $v$ and integrating on $(0,1)$ we obtain that
\[-\frac{1}{2}\frac{d}{dt}\|v(\cdot,t)\|_{L^2(0,1)}^2+\int_0^1 a|v_x|^2\,dx=-\int_0^1c|v|^2\,dx.\]
Hence,
    \[-\frac{1}{2}\frac{d}{dt}\|v(\cdot,t)\|_{L^2(0,1)}^2+\frac{1}{2}\int_0^1 a|v_x|^2\,dx\leq C\|v(\cdot,t)\|_{L^2(0,1)}^2.\] 
Thus,
\begin{equation}\label{eq5.1}
    \|v(\cdot,0)\|_{L^2(0,1)}^2\leq e^{2Ct}\|v(\cdot,t)\|_{L^2(0,1)}^2 \ \ \ \forall t\in(0,T).
\end{equation}
	
Integrating \eqref{eq5.1} on $(T/4,3T/4)$ and using \eqref{eq5.0} we deduce that
\begin{align*}
    \|v(\cdot,0)\|_{L^2(0,1)}^2&=\frac{2}{T}\int_{T/4}^{3T/4}\|v(\cdot,0)\|_{L^2(0,1)}^2\,dt\leq C\int_{T/4}^{3T/4}\int_0^1|v|^2\,dx\,dt\\ 
    &\leq  C\int_{T/4}^{3T/4}\int_0^1s^3\lambda^4e^{-2s\sigma}\xi^3|v|^2\,dx\,dt\leq C\int_0^T\int_\omega e^{-2s\sigma}\xi^3|v|^2\,dx\,dt.
\end{align*}
\end{proof}

\section{Null controllability for the linear problem}\label{NC}

This section is dedicated to proving the null controllability of the linear problem \eqref{prob0}, stated in Theorem \ref{main}. The first step is to derive an approximate null controllability result.

To establish this, let us set $\varepsilon>0$. For any $f\in L^2(Q)$, we define a functional
\[J_\varepsilon(f)=\frac{1}{2}\iint_Q|f|^2\,dx\,dt+\frac{1}{2\varepsilon}\int_0^1|u^f(x,T)|^2\,dx,\]
where $u^f$ is the weak solution of \eqref{prob0}. It is not difficult to see that $J_\varepsilon$ is continuous, strictly convex, and satisfies 
\[J_\varepsilon(f)\to\infty \ \ \ \mbox{as} \ \ \ \|f\|_{L^2(Q)}\to\infty.\]
Hence $J_\varepsilon$ has a unique critic point that is a global minimum. Let us denote this minimum by $f_\varepsilon$ and $u_\varepsilon:=u^{f_\varepsilon}$.

\begin{lem}
    If $\varphi_\varepsilon$ is the weak solution of the problem \begin{equation}\label{problc}
\begin{cases}
\varphi_{\varepsilon t}+(a(x) \varphi_{\varepsilon x})_x-c(x,t)\varphi_\varepsilon= 0 &\text{in }    Q,\\ \varphi_{\varepsilon} (0,t)=\varphi_{\varepsilon}(1,t)=0   & \text{in }  (0,T),\\ \varphi_\varepsilon(x,T)= \frac{1}{\varepsilon}u_\varepsilon(x,T)  & \text{in } (0,1),
\end{cases}
\end{equation}then $f_\varepsilon=-1_\omega\varphi_\varepsilon$.
\end{lem}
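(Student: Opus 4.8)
The plan is to compute the Euler--Lagrange equation for the minimizer $f_\varepsilon$ of $J_\varepsilon$ and identify the resulting expression with $-1_\omega\varphi_\varepsilon$. First I would use that $f_\varepsilon$ is the global minimum of the strictly convex, $C^1$ functional $J_\varepsilon$, so that the Gateaux derivative vanishes: for every $g\in L^2(Q)$,
\[
0=\frac{d}{dh}J_\varepsilon(f_\varepsilon+hg)\Big|_{h=0}=\iint_Q f_\varepsilon\, g\,dx\,dt+\frac{1}{\varepsilon}\int_0^1 u_\varepsilon(x,T)\, w^g(x,T)\,dx,
\]
where $w^g$ is the solution of \eqref{prob0} with $u_0=0$ and right-hand side $g1_\omega$ (by linearity, $u^{f_\varepsilon+hg}=u_\varepsilon+h w^g$, using Proposition \ref{well-pos} for well-posedness and for the continuity/differentiability of $f\mapsto u^f$).

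The key step is the duality/integration-by-parts identity linking $w^g$ and $\varphi_\varepsilon$. I would multiply the equation for $w^g$ by $\varphi_\varepsilon$ and the equation \eqref{problc} for $\varphi_\varepsilon$ by $w^g$, integrate over $Q$, and subtract. The parabolic terms $w^g_t$ and $\varphi_{\varepsilon t}$ combine into $\frac{d}{dt}\int_0^1 w^g\varphi_\varepsilon\,dx$, while the degenerate elliptic terms $(a w^g_x)_x$ and $(a\varphi_{\varepsilon x})_x$ cancel after integrating by parts (the boundary terms in $x$ vanish because both functions lie in $H^1_a$ with homogeneous Dirichlet conditions, which is exactly what the space $H^1_a$ is designed to guarantee), and the zeroth-order terms $\pm c\,w^g\varphi_\varepsilon$ cancel. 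Integrating the surviving total $t$-derivative over $(0,T)$ and using $w^g(\cdot,0)=0$, $\varphi_\varepsilon(\cdot,T)=\tfrac1\varepsilon u_\varepsilon(\cdot,T)$, one obtains
\[
\frac{1}{\varepsilon}\int_0^1 u_\varepsilon(x,T)\,w^g(x,T)\,dx=\int_0^1\varphi_\varepsilon(x,T)\,w^g(x,T)\,dx=\iint_Q \varphi_\varepsilon\, g\,1_\omega\,dx\,dt.
\]
Substituting this into the Euler--Lagrange identity gives $\iint_Q\bigl(f_\varepsilon+1_\omega\varphi_\varepsilon\bigr)g\,dx\,dt=0$ for all $g\in L^2(Q)$, hence $f_\varepsilon=-1_\omega\varphi_\varepsilon$.

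The main obstacle is justifying the integration by parts in the space variable rigorously in the degenerate setting: one must check that the boundary contributions $a\,w^g_x\varphi_\varepsilon$ and $a\,\varphi_{\varepsilon x}w^g$ at $x=0,1$ genuinely vanish and that all integrals are finite, which requires invoking the definition of weak solution in $C^0([0,T];H^1_a)\cap C^1([0,T];L^2)$ from Proposition \ref{well-pos} rather than doing formal manipulations; a clean way around a direct computation is to phrase the whole argument via the weak formulation of \eqref{prob0'} tested against $w^g$ (or a density argument reducing to smooth data), so that the duality pairing is legitimate by construction. A secondary, more routine point is confirming the differentiability of $h\mapsto J_\varepsilon(f_\varepsilon+hg)$ and that the unique critical point is the minimum, both of which follow from strict convexity and coercivity already noted before the lemma.
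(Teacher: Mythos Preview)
Your proposal is correct and follows the same overall strategy as the paper: compute the G\^ateaux derivative of $J_\varepsilon$ at its minimizer, reduce to a duality identity between the linearized forward solution $w^g$ (the paper's $z^h$) and the adjoint state $\varphi_\varepsilon$, and conclude. The one difference is in how that duality identity is obtained: the paper writes $z^h(\cdot,T)=\int_0^T e^{(T-t)A}1_\omega h\,dt$ via the semigroup generated by $Au=(au_x)_x-cu$ and then passes to the adjoint semigroup, recognizing $e^{(T-t)A^*}\bigl(\tfrac1\varepsilon u_\varepsilon(\cdot,T)\bigr)=\varphi_\varepsilon(\cdot,t)$; you instead multiply the two equations by each other and integrate by parts directly. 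Your route is slightly more hands-on and makes the cancellation of the degenerate elliptic and zeroth-order terms explicit, at the price of the regularity/boundary-term justification you flag; the paper's semigroup formulation hides that computation inside the definition of $A^*$ and the weak solution framework, so no separate justification is needed. Either way the argument is standard and the conclusion is the same.
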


\begin{proof}
    For the sake of simplicity, let us set
\[Au:=(au_x)_x-cu \ \ \ \mbox{and} \ \ \ L_t(f)=\int_0^te^{(t-s)A}1_\omega f(x,s)\,ds.\] Then, $u^f$ is the solution of \eqref{prob0} if, and only if, \[u^f(x,t)=e^{tA}u_0(x)+L_t(f)(x).\]   

For $h\in L^2(Q)$, since \eqref{prob0} is linear, we have $u^{f+h}=u^f+z^h$, where $z^h$ is the solution of 
\begin{equation*}
\begin{cases}
z_t^h=Az^h+1_\omega h &\text{in }    Q,\\ z^h(0,t)=z^h(1,t)=0   & \text{in }  (0,T),\\ z^h(x,0)=0  & \text{in } (0,1).
\end{cases}
\end{equation*}
This leads us to \[J_\varepsilon(f+h)-J_\varepsilon(f)=\int_{0}^{T}\!\!\!\!\int_{0}^{1} [hf+\frac{1}{2}|h|^2]\,dx\,dt+\frac{1}{2\varepsilon}\int_0^12[u^f(x,T)z^h(x,T)+|z^h(x,T)|^2]\,dx.\]Thus
\begin{align*}
    J'_\varepsilon(f)h&=\int_0^T\int_0^1 fh\,dx\,dt+\frac{1}{\varepsilon}\int_0^1u^f(x,T)z^h(x,T)\,dx\\ &= \int_0^T\int_0^1 fh\,dx\,dt+\frac{1}{\varepsilon}\int_0^1u^f(x,T)\int_0^Te^{(T-t)A}1_\omega h\,dt\,dx\\ 
    &=\int_0^T\int_0^1 fh\,dx\,dt+\int_0^T\left\langle\frac{1}{\varepsilon}1_\omega u^f(x,T), e^{(T-t)A}h\right\rangle_{L^2(0,1)}dt\\&=\int_0^T\left[\langle  f,h\rangle_{L^2(0,1)}+\left\langle 1_\omega e^{(T-t)A^*}\left(\frac{1}{\varepsilon}u^f(x,T)\right),h\right\rangle_{L^2(0,1)}\right]dt.
\end{align*}
The result now follows from the fact that $J'_\varepsilon(f_\varepsilon)=0$.
\end{proof}

\begin{prop}\label{prop.3.2}
    For any $\varepsilon>0$, there exist $f_\varepsilon\in L^2(Q)$ and a corresponding solution $u_\varepsilon$ of \eqref{prob0} such that \[\iint_Q|f_\varepsilon|^2\,dx\,dt\leq C\|u_0\|^2_{L^2(0,1)} \ \ \ \mbox{and} \ \ \ \int_0^1|u_\varepsilon(x,T)|^2\,dx\leq C\varepsilon\|u_0\|^2_{L^2(0,1)}.\]
\end{prop}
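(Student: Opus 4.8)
The plan is to extract the two estimates from the optimality system by testing the equations against each other, exactly as in the classical penalization argument for null controllability. First I would recall that the minimizer $f_\varepsilon$ satisfies $f_\varepsilon = -1_\omega \varphi_\varepsilon$, where $\varphi_\varepsilon$ solves the adjoint problem \eqref{problc} with terminal data $\varphi_\varepsilon(x,T) = \frac{1}{\varepsilon} u_\varepsilon(x,T)$. The key computation is to multiply the equation for $u_\varepsilon$ (that is, \eqref{prob0} with $f = f_\varepsilon$) by $\varphi_\varepsilon$, multiply the equation for $\varphi_\varepsilon$ by $u_\varepsilon$, integrate over $Q$, and subtract. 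All the interior terms involving $(a u_x)_x$, $(a\varphi_x)_x$ and the zero-order term $c$ cancel after integration by parts in $x$ (the boundary terms at $x=0,1$ vanish since all functions lie in $H^1_a$, and the degeneracy causes no trouble because $a$ is bounded and the relevant traces are those at the endpoints where $a$ does not degenerate). What survives is a boundary-in-time identity: the $\partial_t$ terms give $\int_0^1 u_\varepsilon(x,T)\varphi_\varepsilon(x,T)\,dx - \int_0^1 u_\varepsilon(x,0)\varphi_\varepsilon(x,0)\,dx$, while the forcing term $f_\varepsilon 1_\omega = -1_\omega^2 \varphi_\varepsilon$ on the right contributes $-\iint_Q 1_\omega |\varphi_\varepsilon|^2 = -\iint_Q |f_\varepsilon|^2$. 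Using $u_\varepsilon(x,0)=u_0(x)$ and $\varphi_\varepsilon(x,T)=\frac1\varepsilon u_\varepsilon(x,T)$, this rearranges to
\[
\iint_Q |f_\varepsilon|^2\,dx\,dt + \frac{1}{\varepsilon}\int_0^1 |u_\varepsilon(x,T)|^2\,dx = -\int_0^1 u_0(x)\,\varphi_\varepsilon(x,0)\,dx.
\]
(Equivalently, since $J_\varepsilon(f_\varepsilon)\le J_\varepsilon(0)$, one already gets $\iint_Q|f_\varepsilon|^2 + \frac1\varepsilon\int|u_\varepsilon(\cdot,T)|^2 \le \frac1\varepsilon\int|u^0(\cdot,T)|^2$ where $u^0$ is the free solution; but the duality identity above is sharper and is what makes the argument close.)

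Next I would estimate the right-hand side by Cauchy--Schwarz, $\big|\int_0^1 u_0 \varphi_\varepsilon(\cdot,0)\,dx\big| \le \|u_0\|_{L^2(0,1)}\|\varphi_\varepsilon(\cdot,0)\|_{L^2(0,1)}$, and then invoke the observability inequality from Corollary \ref{cor.obs}. Since $\varphi_\varepsilon$ solves \eqref{prob0'} with $h=0$ (up to the sign of $c$, which is harmless — it is the same adjoint class), Corollary \ref{cor.obs} gives
\[
\|\varphi_\varepsilon(\cdot,0)\|_{L^2(0,1)}^2 \le C\iint_{\omega_T} e^{-2s\sigma}\xi^3 |\varphi_\varepsilon|^2\,dx\,dt.
\]
Because $e^{-2s\sigma}\xi^3$ is bounded on $\overline Q$ (the weight $e^{-2s\sigma}$ is bounded by $1$ away from $t=0,T$ times powers of $\theta$, and $e^{-2s\sigma}\xi^3$ extends continuously by $0$ to the endpoints in $t$, hence is bounded uniformly), the right-hand side is bounded by $C\iint_{\omega_T}|\varphi_\varepsilon|^2 = C\iint_Q 1_\omega|\varphi_\varepsilon|^2 = C\iint_Q |f_\varepsilon|^2$. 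Combining,
\[
\iint_Q |f_\varepsilon|^2\,dx\,dt + \frac{1}{\varepsilon}\int_0^1 |u_\varepsilon(x,T)|^2\,dx \le C\|u_0\|_{L^2(0,1)}\Big(\iint_Q|f_\varepsilon|^2\,dx\,dt\Big)^{1/2}.
\]
Dropping the (nonnegative) second term on the left and dividing by $\big(\iint_Q|f_\varepsilon|^2\big)^{1/2}$ yields $\iint_Q|f_\varepsilon|^2\,dx\,dt \le C^2\|u_0\|_{L^2(0,1)}^2$, the first asserted bound. Feeding this back into the full inequality, $\frac1\varepsilon\int_0^1|u_\varepsilon(x,T)|^2\,dx \le C\|u_0\|\,\|f_\varepsilon\|_{L^2(Q)} \le C^2\|u_0\|_{L^2(0,1)}^2$, which gives the second bound after multiplying by $\varepsilon$. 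One should note the constant $C$ is the observability constant from Corollary \ref{cor.obs} and is independent of $\varepsilon$.

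The only genuinely delicate point is justifying the integration-by-parts identity rigorously in the degenerate setting: one must check that $u_\varepsilon, \varphi_\varepsilon$ have enough regularity for the manipulations — here Proposition \ref{well-pos} (and its analogue for the backward problem) provides $u_\varepsilon \in C^0([0,T];H^1_a)\cap C^1([0,T];L^2)$ and likewise for $\varphi_\varepsilon$, so the products $u_\varepsilon\varphi_\varepsilon$, $a u_{\varepsilon x}\varphi_{\varepsilon x}$ are integrable and the time derivatives act in $L^2$, and that the spatial boundary terms at $x=0,1$ vanish because both functions belong to $H^1_a$ and satisfy the Dirichlet condition there. Since the degeneracy of $a$ sits strictly inside $(0,1)$, no boundary contribution arises from the degenerate set. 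Everything else is a routine rearrangement. I expect the identification of the uniform bound on the Carleman weight $e^{-2s\sigma}\xi^3$ and this integration-by-parts justification to be the steps requiring the most care, though both are standard.
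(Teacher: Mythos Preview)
Your proposal is correct and follows essentially the same route as the paper: derive the duality identity
\[
\iint_Q|f_\varepsilon|^2\,dx\,dt+\frac{1}{\varepsilon}\int_0^1|u_\varepsilon(x,T)|^2\,dx=\int_0^1\varphi_\varepsilon(x,0)\,u_0(x)\,dx
\]
from the optimality system, bound the right-hand side by Cauchy--Schwarz, invoke the observability inequality (absorbing the bounded weight $e^{-2s\sigma}\xi^3$), and close the two estimates exactly as you describe. Two cosmetic remarks: with the signs as in \eqref{prob0} and \eqref{problc} you should \emph{add}, not subtract, the two tested equations so that the $c$-terms cancel, and the resulting identity has $+\int_0^1 u_0\varphi_\varepsilon(\cdot,0)$ on the right rather than the minus sign you wrote --- but since you immediately take absolute values this does not affect the argument.
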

\begin{proof}
    Let us consider $f_\varepsilon$ as the function given in the proof of the previous lemma. Multiplying the equation in \eqref{problc} by $u_\varepsilon$ and integrating in $Q$, we get \begin{equation*}
        \frac{1}{\varepsilon}\int_0^1|u_\varepsilon(x,T)|^2\,dx-\int_0^1\varphi_\varepsilon(x,0)u_0(x)\,dx=-\int\!\!\!\!\int_Q|f_\varepsilon|^2\,dx\,dt.
    \end{equation*}
    Therefore, \begin{equation}\label{eq5.2.1}
        \frac{1}{\varepsilon}\int_0^1|u_\varepsilon(x,T)|^2\,dx+ \int_{0}^{T} \int_{\omega}|f_\varepsilon|^2\,dx\,dt=\int_0^1\varphi_\varepsilon(x,0)u_0(x)\,dx.
    \end{equation} 
    The observability inequality \eqref{obs} lead us to \begin{align*}
        \int\!\!\!\!\int_Q|f_\varepsilon|^2\,dx\,dt&\leq\int_0^1\varphi_\varepsilon(x,0)u_0(x)\leq \|u_0\|_{L^2(0,1)}\left[C\int_0^T\int_\omega|\varphi_\varepsilon|^2\,dx\,dt\right]^{1/2}\\ &\leq C\|u_0\|_{L^2(0,1)}\left[\int_0^T\int_\omega|f_\varepsilon|^2\,dx\,dt\right]^{1/2}\\
        &\leq C\|u_0\|_{L^2(0,1)}\left[\int\!\!\!\!\int_Q|f_\varepsilon|^2\,dx\,dt\right]^{1/2}.
    \end{align*}
    Hence \[\int\!\!\!\!\int_Q|f_\varepsilon|^2\,dx\,dt\leq C\|u_0\|^2_{L^2(0,1)}.\]
    On the other hand, in a similar way, from \eqref{eq5.2.1} we obtain \begin{align*}
        \int_0^1|u_\varepsilon(x,T)|^2\,dx&\leq \varepsilon\int_0^1\varphi_\varepsilon(x,0)u_0(x)\,dx\leq\varepsilon\|u_0\|_{L^2(0,1)}.\left[C\int\!\!\!\!\int_Q|f_\varepsilon|^2\,dx\,dt\right]^{1/2}\\&\leq C\varepsilon\|u_0\|^2_{L^2(0,1)}.
    \end{align*}This concludes the proof.
\end{proof}

Now, we are ready to prove Theorem \ref{main}. Indeed, from inequality \eqref{ineq1} and Proposition \ref{prop.3.2}, combined with some standard arguments, we have
\begin{equation*}
\begin{cases}
    f_\ep \rightharpoonup f \text{ in } L^2(Q);\\
    u_\ep \rightharpoonup u \text{ in } L^2(0,T;H^1_a); \\
    u_{\ep t} \rightharpoonup u_t \text{ in } L^2(0,T;H^{-1}_a); \\
    u_\ep (\cdot ,T) \rightharpoonup u(\cdot ,T) \text{ in } L^2 (0,1).
\end{cases}
\end{equation*}
Now, taking $\gamma\in H_a^1$, multiplying the equation in \eqref{prob0} by $\gamma$ and integrating on $Q$, we obtain \begin{align*}
    \int_0^1u_\varepsilon(\cdot,T)\gamma \,dx-\int_0^1u_0\gamma\,dx+\iint_Qau_{\varepsilon x}\gamma_x\,dx\ dt+\iint_Qcu_\varepsilon\gamma\,dx\,dt=\iint_Q1_\omega f_\varepsilon \gamma \,dx\ dt.
\end{align*}Thus \[\int_0^1u_\varepsilon(\cdot,T)\gamma \,dx=\int_0^1u_0\gamma\,dx-\iint_Qau_{\varepsilon x}\gamma_x\,dx\ dt-\iint_Qcu_\varepsilon\gamma\,dx\,dt+\iint_Q1_\omega f_\varepsilon \gamma \,dx\ dt=:L_\varepsilon\]Using that $f_\varepsilon\rightharpoonup f$ in $L^2(Q)$ and $u_\varepsilon\rightharpoonup u$ in $L^2(0,T;H_a^1)$ we deduce that \[\lim_{\varepsilon\to0}L_\varepsilon =\int_0^1u_0\gamma\,dx-\iint_Qau_{x}\gamma_x\,dx\ dt-\iint_Qcu\gamma\,dx\,dt+\iint_Q1_\omega f \gamma \,dx\ dt=\int_0^1u(\cdot,T)\gamma\,dx.\] 
Therefore,\[\lim_{\varepsilon\to0}\int_0^1u_\varepsilon(\cdot,T)\gamma\,dx=\int_0^1u(\cdot,T)\gamma\,dx \ \ \forall \gamma\in H_a^1.\]
On the other hand, Proposition \ref{prop.3.2} gives us \[\left|\int_0^1u_\varepsilon(x,T)\gamma(x)\,dx\right|\leq \|u_\varepsilon(\cdot,T)\|_{L^2(0,1)}\|\gamma\|_{L^2(0,1)}\leq\varepsilon C\to0.\]Hence, \[\int_0^1u(\cdot,T)\gamma\,dx=0 \ \ \forall \gamma\in H_a^1\] and this lead us to $u(\cdot,T)=0$. So that, the proof of Theorem \ref{main} is concluded.
\bibliography{references}

\begin{thebibliography}{6}
\expandafter\ifx\csname natexlab\endcsname\relax\def\natexlab#1{#1}\fi
\providecommand{\url}[1]{\texttt{#1}}
\providecommand{\href}[2]{#2}
\providecommand{\path}[1]{#1}
\providecommand{\DOIprefix}{doi:}
\providecommand{\ArXivprefix}{arXiv:}
\providecommand{\URLprefix}{URL: }
\providecommand{\Pubmedprefix}{pmid:}
\providecommand{\doi}[1]{\href{http://dx.doi.org/#1}{\path{#1}}}
\providecommand{\Pubmed}[1]{\href{pmid:#1}{\path{#1}}}
\providecommand{\bibinfo}[2]{#2}
\ifx\xfnm\relax \def\xfnm[#1]{\unskip,\space#1}\fi
\bibitem[{Ara{\'u}jo et~al.(2023)Ara{\'u}jo, Demarque \&
  Viana}]{araujo2023carleman}
\bibinfo{author}{Ara{\'u}jo, B.~S.}, \bibinfo{author}{Demarque, R.}, \&
  \bibinfo{author}{Viana, L.} (\bibinfo{year}{2023}).
\newblock \bibinfo{title}{Carleman inequality for a class of super strong
  degenerate parabolic operators and applications}.
\newblock {\it \bibinfo{journal}{Electronic Journal of Qualitative Theory of
  Differential Equations}\/},  {\it \bibinfo{volume}{2023}\/},
  \bibinfo{pages}{1--25}.
\bibitem[{Cannarsa \& Fragnelli(2006)}]{cannarsa2006null}
\bibinfo{author}{Cannarsa, P.}, \& \bibinfo{author}{Fragnelli, G.}
  (\bibinfo{year}{2006}).
\newblock \bibinfo{title}{Null controllability of semilinear degenerate
  parabolic equations in bounded domains.}
\newblock {\it \bibinfo{journal}{Electronic Journal of Differential
  Equations}\/},  (pp. \bibinfo{pages}{1--20}).
\bibitem[{Faria(2020)}]{faria2020carleman}
\bibinfo{author}{Faria, J.} (\bibinfo{year}{2020}).
\newblock \bibinfo{title}{Carleman estimates and observability inequalities for
  a class of problems ruled by parabolic equations with interior degenaracy}.
\newblock {\it \bibinfo{journal}{Applied Mathematics \& Optimization}\/},  (pp.
  \bibinfo{pages}{1--24}).
\bibitem[{Fragnelli et~al.(2012)Fragnelli, Goldstein, Goldstein \&
  Romanelli}]{fragnelli2012}
\bibinfo{author}{Fragnelli, G.}, \bibinfo{author}{Goldstein, G.~R.},
  \bibinfo{author}{Goldstein, J.~R.}, \& \bibinfo{author}{Romanelli, S.}
  (\bibinfo{year}{2012}).
\newblock \bibinfo{title}{Generators with interior degeneracy on spaces of
  $l^2$ type}.
\newblock {\it \bibinfo{journal}{Electronic Journal of Differential
  Equations}\/},  {\it \bibinfo{volume}{2012}\/}, \bibinfo{pages}{1--30}.
\bibitem[{Fragnelli et~al.(2015)Fragnelli, Marinoschi, Mininni \&
  Romanelli}]{fragnelli2015}
\bibinfo{author}{Fragnelli, G.}, \bibinfo{author}{Marinoschi, G.},
  \bibinfo{author}{Mininni, R.~M.}, \& \bibinfo{author}{Romanelli, S.}
  (\bibinfo{year}{2015}).
\newblock \bibinfo{title}{Identification of a diffusion coefficient in strongly
  degenerate parabolic equations with interior degeneracy}.
\newblock {\it \bibinfo{journal}{Journal of Evolution Equations}\/},  (pp.
  \bibinfo{pages}{27--51}).
\bibitem[{Fragnelli \& Mugnai(2013)}]{fragnelli2013carleman}
\bibinfo{author}{Fragnelli, G.}, \& \bibinfo{author}{Mugnai, D.}
  (\bibinfo{year}{2013}).
\newblock \bibinfo{title}{Carleman estimates and observability inequalities for
  parabolic equations with interior degeneracy}.
\newblock {\it \bibinfo{journal}{Advances in Nonlinear Analysis}\/},  {\it
  \bibinfo{volume}{2}\/}, \bibinfo{pages}{339--378}.

\end{thebibliography}
	
\end{document}